\numberwithin{equation}{section}
\newtheorem{theorem}{Theorem}[section]
\newtheorem{proposition}[theorem]{Proposition}
\newtheorem{corollary}[theorem]{Corollary}
\theoremstyle{definition}
\newtheorem{remark}[theorem]{Remark}
\def\now{ \ifnum\hours>11 \ifnum\hours>12 \advance\hours by
-12 \fi \number\hours:\ifnum\mins<10 0\fi \number\mins\ pm,\ \else
\ifnum\hours=0 \hours=12 \fi \number\hours:\ifnum\mins<10 0\fi
\number\mins\ am,\ \fi}
\newcommand{\V}{\mathbb{V}}
\newcommand{\bea}{\begin{equation}}
\newcommand{\eea}{\end{equation}}
\newcommand{\CSK}{Cauchy-Stieltjes Kernel (CSK)\ \renewcommand{\CSK}{CSK\ }}
\newcommand{\FEF}{Free Exponential  (FE)\ \renewcommand{\FEF}{FE\ }}
\title{ Variance function of boolean additive convolution.}
\author{Raouf Fakhfakh }
\address{ Laboratory of Probability and Statistics, Faculty of Science,
Sfax University,  Sfax, Tunisia.}
\email{fakhfakh.raouf@gmail.com}
\keywords{Variance function, Cauchy kernel, boolean additive convolution}
\begin{document}

\begin{abstract}Suppose $\V_{\nu}$ is the pseudo-variance function of the \CSK family ${\mathcal{K}_{+}}(\nu)$ generated by a non degenerate  probability measure $\nu$  with support bounded from above. We determine the formula for pseudo-variance function (or variance function $V_{\nu}$ in case of existence) under boolean additive convolution power.  This formulas is used to identify the relation between variance functions under Boolean Bercovici-Bata Bijection between probability measures. We also gives the connection between boolean cumulants and variance function and we relate  boolean cumulants of some probability measures to Catalan numbers and Fuss Catalan numbers.
 \\
 \\
AMS Mathematics Subject Classification 2010 : 60E10; 46L54.
\end{abstract}

\maketitle

\section{Introduction}

According to Weso{\l}owski \cite{Wesolowski99}, the kernel families generated by a kernel $k(x,\theta)$ with generating measure $\nu$ is the set of probability measures
$$\left\{k(x,\theta)/L(\theta): \ \theta\in \Theta\right\},$$
where $L(\theta)=\int k(x,\theta)\nu(dx)$ is the normalizing constant and $\nu$ is the generating measure. The theory of natural exponential families (NEFs) is based on the exponential kernel $k(x,\theta)=\exp(\theta x)$. The theory of \CSK families is recently introduced and it arises from a procedure analogous to the definition NEFs by using the Cauchy-Stieltjes kernel $1/(1-\theta x)$ instead of the exponential kernel. Bryc \cite{Bryc-06-08} initiated the study of  \CSK families for compactly supported probability measures $\nu$. He has shown that such families can be parameterized by the mean and under this parametrization, the family (and measure $\nu$) is uniquely determined by the variance function $V(m)$ and the mean $m_0$ of $\nu$.
 He also describe the class of quadratic \CSK families. This class consists of the free Meixner distributions. A formula for variance function under free additive convolution power is also given: that is for $\alpha>0$,
\begin{equation}\label{Vbox+alpha}
V_{\nu^{\boxplus \alpha}}(m)=\alpha V_{\nu}(m/\alpha).
\end{equation}
 In \cite{Bryc-Hassairi-09}, Bryc and Hassairi  extend the results in \CSK families to allow measures $\nu$ with unbounded support, providing the method to determine the domain of means, introducing  the ``pseudo-variance" function that has no direct probabilistic interpretation but has similar properties to the variance function. They have also introduced the notion of reciprocity between two \CSK families by defining a relation between the $R$-transforms of the corresponding generating probability measure. This leads to describing a class of cubic \CSK families (with support bounded from one side) which is related to quadratic class by a relation of reciprocity.  A general description of polynomial variance function with arbitrary degree is given in \cite{Bryc-Raouf-Wojciech-18}. In particular, a complete resolution of cubic compactly supported \CSK families is given. Other properties and characterizations in \CSK families regarding the mean of the reciprocal and orthogonal polynomials are also given in \cite{Raouf-17} and \cite{Raouf-18}.

On the other hand, authors in \cite{Speicher-Woroudi-93} introduce a new kind of convolution between probability measures in the context of non-commutative probability theory with boolean independence: the boolean additive convolution $\uplus$. It plays an important role in free probability theory. However, it gained
prominence in works by Biane, Belinschi and their collaborators (\cite{Bercovici-Pata-Biane-99}), as it turned out to be useful in analyzing
properties of the much more important free convolution.

In this paper, we deal with additive boolean convolution from a point of view related to \CSK families. The variance function is the fundamental concept for NEFs and also for \CSK families. So, after a review of some facts regarding \CSK families, we determine in section 2 the formula for variance function under additive boolean convolution power. This formula is used to identify the relation between variance functions under Boolean Bercovici-Bata Bijection between probability measures. In section 3, we give the connection between boolean cumulants and variance function and we relate  boolean cumulants of some probability measures to Catalan numbers and Fuss Catalan numbers.

\section{Variance function and boolean additive convolution $\uplus$.}
\subsection{\CSK families: Preliminaries and notations}
The notations used in what follows are the ones used in \cite{Bryc-Hassairi-09}, \cite{Bryc-Raouf-Hassairi-14}, \cite{Bryc-Raouf-Wojciech-18}, \cite{Raouf-18} and \cite{Raouf-17}.

\textit{Definition of \CSK families.} Let $\nu$ be a non-degenerate probability measure
with support bounded from above. Then
\begin{equation}   \label{M(theta)}
M_{\nu}(\theta)=\int \frac{1}{1- \theta x} \nu(dx)
\end{equation}
 is well defined for all $\theta\in [0,\theta_+)$ with $1/\theta_+=\max\{0, \sup {\rm supp} (\nu)\}$ and
 $$\mathcal{K}_+(\nu)=\{P_{(\theta,\nu)}(dx)\ :\  \theta\in(0,\theta_+)\}=\{Q_{(m,\nu)}(dx)\ :\  m\in(m_0(\nu),m_+(\nu))\} $$
is the one-sided \CSK family generated by $\nu$. That is,
$$P_{(\theta,\nu)}(dx)=\frac{1}{M_{\nu}(\theta)(1-\theta
x)}\nu(dx)$$ and  $Q_{(m,\nu)}(dx)$ is the corresponding parametrization by
the mean, which is given by $ Q_{(m,\nu)}(dx)=f_{\nu}(x,m)\nu(dx),$ with
\begin{equation} \label{F(V)}
f_{\nu}(x,m):=\left\{
               \begin{array}{ll}
                 \frac{\V_{\nu}(m)}{\V_{\nu}(m)+m(m-x)}, \ \ \ \ m\neq 0& \hbox{;} \\
                 1, \ \ \ \ \ \ \ \ \ \ \ \ \ \ \ \ \ \ \ \ m= 0, \ \ \V_{\nu}(0)\neq 0 & \hbox{;} \\
                 \frac{\V'_{\nu}(0)}{\V'_{\nu}(0)-x}, \ \ \ \ \ \ \ \ \ \ \ \ m= 0, \ \ \V_{\nu}(0)= 0 & \hbox{.}
               \end{array}
             \right.
\end{equation}
%\Comment{$ Q_0(dx)=\frac{\V'(0)}{\V'(0)-x}\nu(dx)$}
and  which involves the {\em pseudo-variance function} $\V_{\nu}(m)$ introduced later on.

\textit{Domain of means.}
The interval
$(m_0(\nu),m_+(\nu))$ is called the (one sided) domain of means, and is
determined as the image of $(0,\theta_+)$ under the strictly
increasing function $k_{\nu}(\theta)=\int x P_{(\theta,\nu)}(dx)$ which is given
by the formula
\begin{equation}\label{L2m}
k_{\nu}(\theta)=\frac{M_{\nu}(\theta)-1}{\theta M_{\nu}(\theta)}.
\end{equation}
From \cite[Remark 3.3]{Bryc-Hassairi-09}  we read out the following:
%\begin{proposition}[\cite{Bryc-Hassairi-09}]\label{P-BH}
for a non-degenerate probability measure $\nu$ with support bounded
from above, define
 \begin{equation}\label{B(nu)}
 B(\nu)=\max\{0,\sup {\rm supp}(\nu)\}=1/\theta_+\in[0,\infty).
 \end{equation}
 Then the one-sided domain of means $(m_0(\nu),m_+(\nu))$ of  is
determined from the following formulas
\begin{equation}\label{m0}
m_0(\nu)=\lim_{\theta\to 0^+} k_{\nu}(\theta)
\end{equation}
and for $B=B(\nu)$,
\begin{equation}\label{m+}
m_+(\nu)=B-\lim_{z\to B^+}\frac{1}{G_{\nu}(z)},
\end{equation}
with Cauchy transform $G_{\nu}(.)$ given by
\begin{equation}\label{G-transform}
G_\nu(z)=\int\frac{1}{z-x}\nu(dx).
\end{equation}
%\end{proposition}
 \begin{remark}
One may define the one-sided \CSK family for a
generating measure with support bounded from below. Then the
one-sided \CSK family ${\mathcal{K}}_-(\nu)$ is defined for
$\theta_-<\theta<0$, where $\theta_-$ is either $1/A(\nu)$ or
$-\infty$ with $A=A(\nu)=\min\{0,\inf supp(\nu)\}$.
%$$\mathcal{K}_-(\nu)=\left\{P_{(\theta,\nu)}(dx)=\frac{1}{M_{\nu}(\theta)(1-\theta x)}\nu(dx),\ \ \theta\in(\theta_-,0)\right\}.$$
In this case, the domain of the means for ${\mathcal{K}}_-(\nu)$ is the interval $(m_-(\nu),m_0(\nu))$ with $m_-(\nu)=A-1/G_\nu(A)$. If $\nu$ has compact support, the natural domain for the
parameter $\theta$ of the two-sided \CSK family
$\mathcal{K}(\nu)=\mathcal{K}_+(\nu)\cup\mathcal{K}_-(\nu)\cup\{\nu\}$
is $\theta_-<\theta<\theta_+$.
 \end{remark}
\textit{Variance and pseudo-variance functions}. The variance function
\begin{equation}
  \label{Def Var}
  V_{\nu}(m)=\int (x-m)^2 Q_{(m,\nu)}(dx)
\end{equation}
 is the fundamental concept of the theory of NEF, and also of the theory of \CSK families as presented in \cite{Bryc-06-08}. Unfortunately, if $\nu$ does not have the first moment (which is the case for 1/2-stable law), all measures in the \CSK family generated by $\nu$ have infinite variance. Therefore, authors in \cite{Bryc-Hassairi-09} introduce the pseudo-variance function $\V_{\nu}(.)$. It is easy to describe explicitly  if
$m_0(\nu)=\int x d\nu$ is finite. Then, see \cite[Proposition3.2]{Bryc-Hassairi-09} we know that
\begin{equation}\label{VV2V}
\frac{\V_{\nu}(m)}{m}=\frac{V_{\nu}(m)}{m-m_0}.\end{equation}
 In particular, $\V_{\nu}=V_{\nu}$ when $m_0(\nu)=0$. In general,
\begin{equation}\label{m2v}
\frac{\V_{\nu}(m)}{m}=\frac{1}{\psi_{\nu}(m)}-m,
\end{equation}
where $\psi_{\nu}:(m_0(\nu),m_+(\nu))\to (0,\theta_+)$ is the inverse of the
function $k_{\nu}(\cdot)$.
The generating measure $\nu$ is determined uniquely by the
 pseudo-variance function $\V_{\nu}$ through the following identities (for technical details, see \cite{Bryc-Hassairi-09}).
if
\begin{equation}\label{z2m}
 z=z(m)=m+\frac{\V_{\nu}(m)}{ m}
 \end{equation}
then the Cauchy transform satisfies
 \begin{equation}
  \label{G2V}
  G_\nu(z)=\frac{m}{\V_{\nu}(m)}.
\end{equation}

 \textit{The effects of affine transformations}. Here we collect formulas that describe the effects on the corresponding \CSK family of applying
 an affine transformation to the generating measure. For $\delta\neq0$ and $\gamma\in\mathbb{R}$, let $f(\nu)$ be the
 image of $\nu$ under the affine map $x\longmapsto(x-\gamma)/\delta$. In other words, if $X$ is a
 random variable with law $\nu$ then $f(\nu)$ is the law of
 $(X-\gamma)/\delta$, or
 $f(\nu)=D_{1/\delta}(\nu\boxplus\delta_{-\gamma})$, where
 $D_r(\mu)$ denotes the dilation of measure $\mu$ by a number
 $r\neq0$, i.e. $D_r(\mu)(U)=\mu(U/r)$.\\
 %It is well known that $G_{f(\nu)}(z)=\delta G_\nu(\delta
 %z+\gamma)$ and $\mathcal{R}_{f(\nu)}(z)=
 %\frac{1}{\delta}\mathcal{R}_\nu(z/\delta)-\frac{\gamma}{\delta}$.
 The effects of the affine transformation on the corresponding \CSK
 family are as follows :

 $\bullet$ Point $m_0$ is transformed to $(m_0-\gamma)/\delta$.
 In particular, if $\delta<0$, then $f(\nu)$ has support bounded
 from below and then it generates the left-sided $\mathcal{K}_-(f(\nu))$.

 $\bullet$ For $m$ close enough to $(m_0-\gamma)/\delta$ the pseudo-variance function is
 \begin{equation}\label{pseudo-var affine transf}
 \mathbb{V}_{f(\nu)}(m)=\frac{m}{\delta(m\delta+\gamma)}\mathbb{V}_\nu(\delta m+\gamma).
 \end{equation}
 In particular, if the variance function exists, then
 $$V_{f(\nu)}(m)=\frac{1}{\delta^2}V_\nu(\delta m+\gamma).$$
 A special case worth noting is the reflection $f(x)=-x$. If
 $\nu$ has support bounded from above and its right-sided \CSK
 family $\mathcal{K}_+(\nu)$ has domain of means $(m_0,m_+)$ and
 pseudo-variance function $\mathbb{V}_\nu(m)$, then $f(\nu)$
 generates the left-sided \CSK family $\mathcal{K}_-(f(\nu))$
 with domain of means $(-m_+,-m_0)$ and the pseudo-variance
 function $\mathbb{V}_{f(\nu)}(m)=\mathbb{V}_\nu(-m)$.

\subsection{Variance function under boolean additive convolution power.}
For a probability measure $\nu$ on $\mathbb{R}$, its Cauchy transform $G_{\nu}$ is defined by \eqref{G-transform}. Note that $G_{\nu}$ maps $\mathbb{C}^+$ to $\mathbb{C}^-$. The Boolean additive convolution is determined by the $K$-transform $K_{\nu}$ of $\nu$ which is defined as
\begin{equation}\label{Knu}
K_{\nu}(z)=z-\frac{1}{G_{\nu}(z)}, \ \ \ \ for \ z\in \mathbb{C}^+.
\end{equation}
The function $K_{\nu}$ is usually called self energy and it represent the analytic backbone of boolean additive convolution. For two probability measures $\mu$ and $\nu$, the additive Boolean convolution $\mu\uplus\nu$ is determined by
\begin{equation}\label{Kuplus}
K_{\mu\uplus\nu}(z)=K_{\mu}(z)+K_{\nu}(z), \ \ \ \ for \ z\in \mathbb{C}^+,
\end{equation}
and $\mu\uplus\nu$ is again a probability measure.\\
 The following result list properties of $K$-transform that we need.
\begin{proposition}\label{proposition}
Suppose $\V_{\nu}$ is the pseudo-variance function of the \CSK family ${\mathcal{K}_{+}}(\nu)$ generated by a non degenerate  probability measure $\nu$ with $b= \sup\rm supp (\nu)<\infty$. Then
\begin{itemize}
 \item[(i)]   $K_{\nu}$ is strictly decreasing on $(b,+\infty)$.
 \item[(ii)] For $m\in (m_0(\nu),m_+(\nu))$
\begin{equation}\label{KtransformV}
K_{\nu}\left(m+\V_{\nu}(m)/m\right)=m.
\end{equation}
\item[(iii)] $\displaystyle\lim_{z\longrightarrow B^+}K_{\nu}(z)=m_+(\nu)$, with $B=B(\nu)$ given by \eqref{B(nu)}.
\item[(iv)] $\displaystyle\lim_{z\longrightarrow +\infty}K_{\nu}(z)=m_0(\nu)\geq -\infty.$
\end{itemize}

\end{proposition}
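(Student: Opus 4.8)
The backbone of the argument is the elementary identity $K_\nu(z)=k_\nu(1/z)$, valid for real $z>1/\theta_+=B(\nu)$. Indeed, substituting $\theta=1/z$ in \eqref{M(theta)} gives $M_\nu(1/z)=\int\frac{z}{z-x}\,\nu(dx)=z\,G_\nu(z)$, so that \eqref{L2m} becomes
\begin{equation*}
k_\nu(1/z)=\frac{M_\nu(1/z)-1}{(1/z)M_\nu(1/z)}=\frac{zG_\nu(z)-1}{G_\nu(z)}=z-\frac{1}{G_\nu(z)}=K_\nu(z),
\end{equation*}
where for real $z>b$ the value $K_\nu(z)=z-1/G_\nu(z)$ is unambiguous because $z-x\ge z-b>0$ on the support of $\nu$ forces $G_\nu(z)>0$. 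It is also convenient to record the equivalent form $K_\nu(z)=\big(\int\frac{x}{z-x}\,\nu(dx)\big)\big/\big(\int\frac{1}{z-x}\,\nu(dx)\big)$, which exhibits $K_\nu(z)$ as the mean of the tilted probability measure proportional to $(z-x)^{-1}\nu(dx)$. Granting this identity, parts (ii)--(iv) become translations of facts already recorded in the excerpt: (ii) restates \eqref{z2m}--\eqref{G2V}, (iii) restates \eqref{m+}, and (iv) restates \eqref{m0}.

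In detail, for (ii) take $m\in(m_0(\nu),m_+(\nu))$ with $m\ne 0$ and set $z=z(m)=m+\V_\nu(m)/m$; by \eqref{m2v} this equals $1/\psi_\nu(m)$, and since $\psi_\nu(m)\in(0,\theta_+)$ we get $z>1/\theta_+=B(\nu)\ge b$, so $K_\nu$ is defined at $z$ and \eqref{G2V} yields $G_\nu(z)=m/\V_\nu(m)$, whence $K_\nu(z)=z-1/G_\nu(z)=m$; the value $m=0$, when it lies in the domain of means, is recovered from the continuity of $m\mapsto z(m)=1/\psi_\nu(m)$. For (iii) and (iv), item (i) (proved below) shows that $K_\nu$ is monotone on $(b,+\infty)\supseteq(B,+\infty)$, so the required one-sided limits exist; since $z\to B(\nu)^+$ corresponds to $\theta=1/z\to\theta_+^-$, the identity $K_\nu(z)=k_\nu(1/z)$ and the description of $(m_0(\nu),m_+(\nu))$ as the increasing image $k_\nu\big((0,\theta_+)\big)$ give $\lim_{z\to B^+}K_\nu(z)=\lim_{\theta\to\theta_+^-}k_\nu(\theta)=m_+(\nu)$ (alternatively, compare \eqref{Knu} with \eqref{m+} directly); and since $z\to+\infty$ corresponds to $\theta\to0^+$, we obtain $\lim_{z\to+\infty}K_\nu(z)=\lim_{\theta\to0^+}k_\nu(\theta)=m_0(\nu)$ by \eqref{m0}, the bound $m_0(\nu)\ge-\infty$ being automatic.

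The substantive point is (i). For real $z>b$ the integrals $\int(z-x)^{-1}\,\nu(dx)$ and $\int(z-x)^{-2}\,\nu(dx)$ are finite, being dominated by $(z-b)^{-1}$ and $(z-b)^{-2}$, and one may differentiate under the integral sign, so $K_\nu$ is differentiable on $(b,+\infty)$ with
\begin{equation*}
K_\nu'(z)=1+\frac{G_\nu'(z)}{G_\nu(z)^{2}}=1-\frac{\int(z-x)^{-2}\,\nu(dx)}{\left(\int(z-x)^{-1}\,\nu(dx)\right)^{2}}.
\end{equation*}
The Cauchy--Schwarz inequality applied to the functions $1$ and $(z-x)^{-1}$ in $L^2(\nu)$, together with $\nu(\mathbb{R})=1$, shows the quotient on the right is $\ge1$, with equality only if $(z-x)^{-1}$ is $\nu$-a.e.\ constant, i.e.\ only if $\nu$ is a Dirac mass; since $\nu$ is non-degenerate the inequality is strict, giving $K_\nu'(z)<0$ for every $z>b$, which is (i). I expect this Cauchy--Schwarz/strictness step, together with the routine justification of differentiating under the integral and of exchanging the $z\to B^+$ and $z\to+\infty$ limits with the quantities above, to be the only points demanding care; the rest is formal manipulation of \eqref{M(theta)}, \eqref{L2m}, \eqref{z2m}, \eqref{G2V}, \eqref{m0} and \eqref{m+}.
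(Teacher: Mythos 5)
Your proof is correct, but it is organized differently from the paper's. For (ii) and (iii) the two arguments essentially coincide: like the paper, you combine \eqref{Knu} with \eqref{z2m}--\eqref{G2V} for (ii) and with \eqref{m+} for (iii). The differences are in (i) and (iv). For (i) the paper simply quotes the inequality $G_{\nu}(z_1)-G_{\nu}(z_2)>(z_2-z_1)G_{\nu}(z_1)G_{\nu}(z_2)$ from Bryc--Hassairi (their Proposition 3.7) and rearranges it, whereas you reprove the underlying fact from scratch: differentiation under the integral sign gives $K_{\nu}'(z)=1-\int(z-x)^{-2}\nu(dx)\big/\bigl(\int(z-x)^{-1}\nu(dx)\bigr)^{2}$, and Cauchy--Schwarz with the non-degeneracy of $\nu$ makes this strictly negative; this is the same monotonicity statement, but your version is self-contained and identifies exactly where non-degeneracy is used. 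For (iv) the paper deduces the limit from (i) (existence of the monotone limit) together with \eqref{KtransformV} and the fact that $z(m)\nearrow\infty$ as $m\searrow m_0$, while you instead exploit the identity $K_{\nu}(z)=k_{\nu}(1/z)$ (a consequence of $M_{\nu}(1/z)=zG_{\nu}(z)$ and \eqref{L2m}) and read off the limit directly from \eqref{m0}; this identity, which the paper never states explicitly although it is implicit in \eqref{m2v}, also gives you an alternative route to (iii) and neatly unifies (ii)--(iv) as reparametrizations of known facts about $k_{\nu}$. Both treatments are valid; yours trades the external citations for a short direct computation, and your interpretation of $K_{\nu}(z)$ as the mean of the tilted measure proportional to $(z-x)^{-1}\nu(dx)$ makes (i), (iii) and (iv) transparent, while the paper's is shorter on the page because it leans on the quoted propositions of Bryc--Hassairi.
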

\begin{proof}
(i) %It easy follows from \cite[Proposition 3.7]{Bryc-Hassairi-09} and relation \eqref{Kuplus}.
For a probability measure $\nu$ with support in $(-\infty, b]$, $G_{\nu}$ is analytic on the
slit complex plane $\mathbb{C} \setminus (-\infty, b]$. This shows that a probability measure $\nu$ with
support bounded from above is determined uniquely by $G_{\nu}(z)$ on $z\in(b,+\infty)$ for some $b$. This, together with \eqref{Knu} shows that a probability measure $\nu$ with support bounded from above is determined uniquely by $K_{\nu}(z)$ on $z\in(b,+\infty)$. From \cite[Proposition 3.7]{Bryc-Hassairi-09}, we have for $b<z_1<z_2$
\begin{equation}\label{Gz1z2}
G_{\nu}(z_1)-G_{\nu}(z_2)>(z_2-z_1)G_{\nu}(z_1)G_{\nu}(z_2).
\end{equation}
Then, \eqref{Gz1z2} say $1/G_{\nu}(z_2)-1/G_{\nu}(z_1)>z_2-z_1$. This implies that $K_{\nu}(z_2)-K_{\nu}(z_1)<0$.

(ii) From \cite[Proposition 3.3]{Bryc-Hassairi-09}, let $$z=z(m)=m+\V_{\nu}(m)/m.$$ We have that $m\longmapsto z(m)$ is continuous strictly decreasing on $(m_0,m_+)$, $z(m)>0$ on $(m_0,m_+)$, $z(m)\nearrow\infty$ as $m\searrow m_0$ and $z(m) \searrow B$ as $m\nearrow m_+$. In addition from \cite[Proposition 3.5]{Bryc-Hassairi-09}, the Cauchy transform satisfies \eqref{G2V}.
%\begin{equation}
%  \label{G2V}
%  G_\nu(z(m))=m/\V_{\nu}(m).
%\end{equation}
For $m\in (m_0(\nu),m_+(\nu))$, equations \eqref{Knu} and \eqref{G2V} implies that
$$K_{\nu}\left(m+\V_{\nu}(m)/m\right)=m+\V_{\nu}(m)/m-1/G_{\nu}\left(m+\V_{\nu}(m)/m\right)=m.$$
(iii) From \eqref{Knu} and \eqref{m+} (se also \cite[Proposition 3.4]{Bryc-Hassairi-09}) we see that
$$\displaystyle\lim_{z\longrightarrow B^+}K_{\nu}(z)=\displaystyle\lim_{z\longrightarrow B^+}z-1/G_{\nu}(z)=B-\lim_{z\to B^+}1/G_{\nu}(z)=m_+(\nu).$$

(iv) Since the limit exists by part (i), this is a consequence of \eqref{KtransformV}, in fact: $$\displaystyle\lim_{z\longrightarrow +\infty}K_{\nu}(z)=\displaystyle\lim_{m\longrightarrow m_0}K_{\nu}(m+\V_{\nu}(m)/m)=m_0(\nu).$$
\end{proof}
Denote by $\mathcal{M}$ the space of Borel probability measures on $\mathbb{R}$. According to \cite{Speicher-Woroudi-93}, we call a probability measure $\nu \in \mathcal{M}$ is infinitely divisible in the boolean sense, if for each $n\in\mathbb{R}$, there exists $\nu_n\in \mathcal{M}$ such that
$$\nu=\underbrace{\nu_n \uplus.....\uplus \nu_n}_{ n \  \mbox{times}}.$$
Note that all probability measure $\nu \in \mathcal{M}$ are $\uplus$-infinitely divisible, see \cite[Theorem 3.6]{Speicher-Woroudi-93}.
Next, we determine the formula for pseudo-variance function (and variance function $V_{\nu}$ in case of existence) under boolean additive convolution power.
\begin{theorem}\label{TH1}
Suppose $\V_{\nu}$ is the pseudo-variance function of the \CSK family ${\mathcal{K}_{+}}(\nu)$ generated by a non degenerate  probability measure $\nu$  with $b= \sup\rm supp (\nu)<\infty$ and mean $m_0(\nu)$. For $\alpha>0$, we have that:
\begin{itemize}
 \item[(i)] The support of $\nu^{\uplus \alpha}$ is bounded from above.
 \item[(ii)] For $m$ close enough to $\alpha m_0(\nu)$,
\begin{equation}\label{pseudovarnualpha1}
\V_{\nu^{\uplus \alpha}}(m)=\alpha\V_{\nu}(m/\alpha)+m^2(1/\alpha-1).
\end{equation}
Furthermore, if $m_0<+\infty$, then the variance functions of the \CSK families generated by $\nu$ and $\nu^{\uplus \alpha}$ exists and
\begin{equation}\label{varnualpha1}
V_{\nu^{\uplus \alpha}}(m)=\alpha V_{\nu}(m/\alpha)+m(m-\alpha m_0)(1/\alpha-1).
\end{equation}
\end{itemize}
\end{theorem}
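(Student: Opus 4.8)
The argument rests on two ingredients. First, the boolean convolution power multiplies the $K$-transform, $K_{\nu^{\uplus\alpha}}(z)=\alpha K_{\nu}(z)$, which follows from \eqref{Kuplus} and the definition of $\nu^{\uplus\alpha}$ (well defined for every $\alpha>0$ since all measures are $\uplus$-infinitely divisible). Second, Proposition~\ref{proposition}(ii) says that $z_{\rho}(m):=m+\V_{\rho}(m)/m$ is the preimage of $m$ under $K_{\rho}$ on the half-line where $K_{\rho}$ is strictly decreasing. The plan is to invert the second relation: to obtain $\V_{\nu^{\uplus\alpha}}(m)$ it suffices to solve $\alpha K_{\nu}(z)=m$ for $z$ and then set $\V_{\nu^{\uplus\alpha}}(m)=m(z-m)$.

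For (i): since $\mathrm{supp}(\nu)\subseteq(-\infty,b]$, the Cauchy transform $G_{\nu}$ is analytic and zero-free on $\mathbb{C}\setminus(-\infty,b]$, hence so is $K_{\nu}$, and therefore the reciprocal Cauchy transform $z\mapsto z-\alpha K_{\nu}(z)$ of $\nu^{\uplus\alpha}$ continues analytically from $\mathbb{C}^{+}$ to $\mathbb{C}\setminus(-\infty,b]$. On $(b,+\infty)$ this function is strictly increasing (it is $z\mapsto z$ plus the non-decreasing $z\mapsto-\alpha K_{\nu}(z)$, by Proposition~\ref{proposition}(i)), and it tends to $+\infty$ as $z\to+\infty$ since $\alpha K_{\nu}(z)\to\alpha m_{0}(\nu)\in[-\infty,\alpha b]$ by Proposition~\ref{proposition}(iv). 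Hence there is $c\geq b$ with $z-\alpha K_{\nu}(z)>0$ on $(c,+\infty)$, so $G_{\nu^{\uplus\alpha}}=1/(z-\alpha K_{\nu})$ is analytic on $(c,+\infty)$; by the Stieltjes inversion formula a probability measure whose Cauchy transform extends analytically across a real interval carries no mass there, so $\mathrm{supp}(\nu^{\uplus\alpha})\subseteq(-\infty,c]$.

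For (ii): write $\mu=\nu^{\uplus\alpha}$; by (i) Proposition~\ref{proposition} applies to $\mu$, and combining its part (iv) with $K_{\mu}=\alpha K_{\nu}$ gives $m_{0}(\mu)=\alpha m_{0}(\nu)$. Fix $m$ in the domain of means of $\mu$, close enough to $\alpha m_{0}(\nu)$. Since $z_{\mu}(m)\to+\infty$ as $m\to m_{0}(\mu)^{+}$, we may shrink the neighbourhood so that $z_{\mu}(m)$ lies in the region where $K_{\mu}=\alpha K_{\nu}$ holds (where both sides extend analytically from $\mathbb{C}^{+}$); then Proposition~\ref{proposition}(ii) for $\mu$ gives $\alpha K_{\nu}(z_{\mu}(m))=K_{\mu}(z_{\mu}(m))=m$, i.e. $K_{\nu}(z_{\mu}(m))=m/\alpha$. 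As $m/\alpha$ lies in the domain of means of $\nu$ (this is where $m$ close to $\alpha m_{0}(\nu)$ enters) and $K_{\nu}$ is injective on $(b,+\infty)$ by Proposition~\ref{proposition}(i), the unique preimage is $z_{\nu}(m/\alpha)$, so
$$m+\frac{\V_{\nu^{\uplus\alpha}}(m)}{m}=z_{\mu}(m)=z_{\nu}(m/\alpha)=\frac{m}{\alpha}+\frac{\alpha\V_{\nu}(m/\alpha)}{m},$$
and solving for $\V_{\nu^{\uplus\alpha}}(m)$ gives \eqref{pseudovarnualpha1}.

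If moreover $m_{0}=m_{0}(\nu)<\infty$, then $m_{0}(\mu)=\alpha m_{0}$ is finite, both variance functions exist, and \eqref{VV2V} reads $\V_{\nu}(m/\alpha)/(m/\alpha)=V_{\nu}(m/\alpha)/(m/\alpha-m_{0})$ and $\V_{\nu^{\uplus\alpha}}(m)/m=V_{\nu^{\uplus\alpha}}(m)/(m-\alpha m_{0})$; inserting these into \eqref{pseudovarnualpha1} and using the cancellation $(m-\alpha m_{0})/(m/\alpha-m_{0})=\alpha$ yields \eqref{varnualpha1} after a short simplification. The main obstacles are the analytic continuation and support estimate in (i), and, in (ii), checking that $K_{\mu}=\alpha K_{\nu}$ is genuinely valid at the point $z_{\mu}(m)$ — which is exactly why the formula is asserted only for $m$ near the mean $\alpha m_{0}(\nu)$ of $\nu^{\uplus\alpha}$.
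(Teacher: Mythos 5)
Your proof is correct and takes essentially the same route as the paper: you use $K_{\nu^{\uplus\alpha}}=\alpha K_{\nu}$, deduce $m_0(\nu^{\uplus\alpha})=\alpha m_0(\nu)$ from Proposition~\ref{proposition}(iv), then apply the identity \eqref{KtransformV} to both $\nu^{\uplus\alpha}$ and $\nu$ and invoke the strict monotonicity of $K_{\nu}$ on $(b,+\infty)$ to conclude $m+\V_{\nu^{\uplus\alpha}}(m)/m=m/\alpha+\V_{\nu}(m/\alpha)/(m/\alpha)$, with \eqref{VV2V} giving \eqref{varnualpha1}, exactly as in the paper. Your part (i) is in fact more detailed than the paper's; the only slight looseness is the blanket claim that analytic continuation of the Cauchy transform across a real interval already forces zero mass there --- what is really needed is that the continuation $1/(z-\alpha K_{\nu}(z))$ is real-valued on $(c,+\infty)$, which your positivity argument does establish, so Stieltjes inversion applies.
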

\begin{proof}
(i) For measure $\nu$ with support
bounded from above by $b > 0$, $G_{\nu}$ is analytic on the
slit complex plane $\mathbb{C} \setminus (-\infty, b]$. We have that $\{x\in (supp\ \nu)^c; \ G_{\nu}(x)\neq 0 \}\subset (supp\ \nu)^c$ (see \cite[Lemma 2.1]{Takahiro Hasebe-10}. This implies that $K_{\nu}$ and so $K_{\nu^{\uplus  \alpha}}$ are well defined on a subset of  $(supp\ \nu)^c$.
%From the relation
%$$G_{\nu^{\uplus  \alpha}}(z)=\frac{1}{z-\alpha K_{\nu}(z)}=\frac{1}{(1-\alpha)z+\alpha/G_{\nu}(z)},$$
So $G_{\nu^{\uplus  \alpha}}(z)$ is well defined and analytic on a subset of $(b,+\infty)$. Then the support of $\nu^{\uplus  \alpha}$ is bounded from above.

(ii) We see that  $m_0(\nu^{\uplus  \alpha})=\alpha m_0(\nu)$. This follows from proposition \ref{proposition}(iv) and the additive property of the K-transform. So for $m$ close enough to $\alpha m_0(\nu)$ so that $m/\alpha\in(m_0(\nu),m_+(\nu))$ and $m+\V_{\nu^{\uplus \alpha}}(m)/m\in(b,+\infty)$, we can apply \eqref{KtransformV} and the additive property of the K-transform to see that
 $$K_{\nu}\left(m+\frac{\V_{\nu^{\uplus \alpha}}(m)}{m}\right)=\frac{1}{\alpha}K_{\nu^{\uplus \alpha}}\left(m+\frac{\V_{\nu^{\uplus \alpha}}(m)}{m}\right)=\frac{m}{\alpha}=K_{\nu}\left(m/\alpha+\frac{\V_{\nu}(m/\alpha)}{m/\alpha}\right).$$
 Since $K_{\nu}$ is strictly decreasing on $(b,+\infty)$, this implies that
 $$m+\frac{\V_{\nu^{\uplus \alpha}}(m)}{m}=m/\alpha+\frac{\V_{\nu}(m/\alpha)}{m/\alpha},$$
 which is nothing but \eqref{pseudovarnualpha1}. Furthermore, if $m_0<+\infty$, then the variance functions of the \CSK families  ${\mathcal{K}_{+}}(\nu)$ and ${\mathcal{K}_{+}}(\nu^{\uplus \alpha})$ exists and relation \eqref{varnualpha1} follows from \eqref{VV2V} and \eqref{pseudovarnualpha1}.
\end{proof}

\begin{remark}
Let $\nu=\frac{1}{2}\delta_{-1}+\frac{1}{2}\delta_1$ be the symmetric Bernoulli distribution, its Cauchy transform and self energy are respectively
 $$G_{\nu}(z)=\frac{z}{z^2-1} \ \ \ \ \ \ \ \mbox{and} \ \ \ \ \ \ \  K_{\nu}(z)=\frac{1}{z}.$$ With $B(\nu)=\max\{0,\ \sup supp(\nu)\}=1$, we have  from Proposition \ref{proposition}(iii) $$m_+(\nu)=\lim_{z\longrightarrow 1}K_{\nu}(z)=1.$$ Consider $\mu=\nu^{\uplus 2}$, then we have $K_{\mu}(z)=K_{\nu^{\uplus 2}}(z)=2K_{\nu}(z)=2/z$ and $G_{\mu}(z)=\frac{z}{z^2-2}$. So $\mu=\frac{1}{2}\delta_{-\sqrt{2}}+\frac{1}{2}\delta_{\sqrt{2}}$. With $B(\mu)=\max\{0,\ \sup supp(\mu)\}=\sqrt{2}$, we have that $$m_+(\mu)=\lim_{z\longrightarrow \sqrt{2}}K_{\mu}(z)=\sqrt{2}.$$ This implies that $m_+(\nu^{\uplus 2})\neq 2m_+(\nu)$. So there is no "simple formula"
for $m_+$ under additive boolean convolution power. For this reason, in theorem \ref{TH1} we restrict ourself to $m$ close enough to $\alpha m_0(\nu)$.
\end{remark}
\begin{proposition}\label{proposition2}
Suppose $\V_{\nu}$ is the pseudo-variance function of the \CSK family ${\mathcal{K}_{+}}(\nu)$ generated by a non degenerate  probability measure $\nu$ with $b= \sup\rm supp (\nu)<\infty$ and mean $m_0(\nu)$, then for $\alpha> 0$  measure
\begin{equation}\label{nualpha}
\nu_{\alpha}:=D_{1/\alpha}(\nu^{\uplus\alpha})
\end{equation}
has also support bounded from above and there is $\varepsilon>0$ such that the pseudo-variance function of the one sided \CSK family generated by $\nu_{\alpha}$ is
$$\V_{\nu_{\alpha}}(m)=\V_{\nu}(m)/\alpha+(1/\alpha-1)m^2,$$
for all $m\in(m_0,m_0+\varepsilon)$.
\end{proposition}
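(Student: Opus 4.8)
The plan is to obtain the claimed identity by composing Theorem~\ref{TH1}(ii) with the affine-transformation rule \eqref{pseudo-var affine transf} for pseudo-variance functions. First I would record that $\nu_{\alpha}=D_{1/\alpha}(\nu^{\uplus\alpha})$ is the image of $\nu^{\uplus\alpha}$ under the map $x\mapsto x/\alpha$ with $\alpha>0$. By Theorem~\ref{TH1}(i) the measure $\nu^{\uplus\alpha}$ has support bounded from above, and dilation by the positive factor $1/\alpha$ preserves this, so $\nu_{\alpha}$ has support bounded from above. Moreover, since $\nu$ is non-degenerate, $K_{\nu}$ is non-constant by Proposition~\ref{proposition}(i), hence so is $K_{\nu^{\uplus\alpha}}=\alpha K_{\nu}$, and so is $K_{\nu_{\alpha}}$; thus $\nu_{\alpha}$ is non-degenerate and, the dilation factor being positive, it generates the right-sided family $\mathcal{K}_{+}(\nu_{\alpha})$, so $\V_{\nu_{\alpha}}$ is well defined.

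Next I would apply \eqref{pseudo-var affine transf} with $\mu=\nu^{\uplus\alpha}$, $\delta=\alpha$ and $\gamma=0$, so that $f(\mu)=D_{1/\alpha}(\mu)=\nu_{\alpha}$ and the formula reads
\[
\V_{\nu_{\alpha}}(m)=\frac{m}{\alpha(\alpha m)}\,\V_{\nu^{\uplus\alpha}}(\alpha m)=\frac{1}{\alpha^{2}}\,\V_{\nu^{\uplus\alpha}}(\alpha m),
\]
valid for $m$ near the image of $m_{0}(\nu^{\uplus\alpha})$ under $x\mapsto x/\alpha$. By the proof of Theorem~\ref{TH1}(ii) we have $m_{0}(\nu^{\uplus\alpha})=\alpha m_{0}(\nu)$, so that image equals $m_{0}:=m_{0}(\nu)$; in particular $m_{0}(\nu_{\alpha})=m_{0}$, in agreement with the statement. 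I would then substitute \eqref{pseudovarnualpha1} evaluated at $\alpha m$,
\[
\V_{\nu^{\uplus\alpha}}(\alpha m)=\alpha\,\V_{\nu}(m)+(\alpha m)^{2}(1/\alpha-1)=\alpha\,\V_{\nu}(m)+\alpha^{2}m^{2}(1/\alpha-1),
\]
and divide by $\alpha^{2}$ to obtain $\V_{\nu_{\alpha}}(m)=\V_{\nu}(m)/\alpha+m^{2}(1/\alpha-1)$, which is exactly the asserted formula.

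The only point that requires a little care is the admissible range of $m$: \eqref{pseudo-var affine transf} holds for $m$ in some one-sided neighborhood of $m_{0}(\nu_{\alpha})=m_{0}$, while \eqref{pseudovarnualpha1} (applied with argument $\alpha m$) requires $\alpha m$ to lie in some neighborhood of $\alpha m_{0}(\nu)$. Since multiplication by $\alpha$ is continuous and maps the relevant neighborhood of $m_{0}$ onto a neighborhood of $\alpha m_{0}(\nu)=m_{0}(\nu^{\uplus\alpha})$, both constraints are met simultaneously on an interval $(m_{0},m_{0}+\varepsilon)$ for a suitable $\varepsilon>0$. I do not expect any genuine obstacle here: the proposition is a direct composition of two identities already established, so the remaining work is purely the bookkeeping of this parameter range.
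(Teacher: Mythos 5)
Your proof is correct and follows exactly the route the paper takes: the paper's own proof is a one-line citation of \eqref{pseudovarnualpha1} and \eqref{pseudo-var affine transf}, and you have simply carried out that composition (with $\delta=\alpha$, $\gamma=0$) together with the bookkeeping of the admissible interval of means. No issues.
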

\begin{proof}
It follows easily from  \eqref{pseudovarnualpha1} and \eqref{pseudo-var affine transf}.
\end{proof}
The following result gives  formulas for pseudo-variance functions (and variance functions in case of existence) under both free additive convolution and boolean additive convolution power.
\begin{proposition}
Suppose $\V_{\nu}$ is the pseudo-variance function of the \CSK family ${\mathcal{K}_{+}}(\nu)$ generated by a non degenerate  probability measure $\nu$ with support bounded from above. For $\alpha>0$ such that probability measures $\left(\nu^{\boxplus 1/\alpha}\right)^{\uplus \alpha}$ and $\left(\nu^{\uplus 1/\alpha}\right)^{\boxplus \alpha}$ are well defined, their support are bounded from above and they generates \CSK families with pseudo-variance functions
\begin{equation}\label{V1}
\V_{\left(\nu^{\boxplus 1/\alpha}\right)^{\uplus \alpha}}(m)=\V_{\nu}(m)+(1/\alpha-1)m^2,
\end{equation}
and
\begin{equation}\label{V2}
\V_{\left(\nu^{\uplus 1/\alpha}\right)^{\boxplus \alpha}}(m)=\V_{\nu}(m)+(1-1/\alpha)m^2.
\end{equation}
respectively, for $m$ close enough to $m_0$. Furthermore, if $m_0<+\infty$, then the variance functions of the \CSK families generated  respectively by $\nu$, $\left(\nu^{\boxplus 1/\alpha}\right)^{\uplus \alpha}$ and $\left(\nu^{\uplus 1/\alpha}\right)^{\boxplus \alpha}$ exists and for $m$ close enough to $m_0$ we have
\begin{equation}\label{v1}
V_{\left(\nu^{\boxplus 1/\alpha}\right)^{\uplus \alpha}}(m)=V_{\nu}(m)+(1/\alpha-1)m(m-m_0).
\end{equation}
and
\begin{equation}\label{v2}
V_{\left(\nu^{\uplus 1/\alpha}\right)^{\boxplus \alpha}}(m)=V_{\nu}(m)+(1-1/\alpha)m(m-m_0).
\end{equation}
\end{proposition}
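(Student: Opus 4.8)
The plan is to obtain all four identities by composing two transformation rules that are already available: the pseudo-variance formula under free additive convolution power and Theorem~\ref{TH1} for the boolean power. First I would record the pseudo-variance analogue of \eqref{Vbox+alpha}, namely
\[
\V_{\nu^{\boxplus\beta}}(m)=\beta\,\V_{\nu}(m/\beta),\qquad \beta>0,
\]
valid for every $\beta$ for which $\nu^{\boxplus\beta}$ is defined; when $m_0$ is finite this follows from \eqref{Vbox+alpha}, \eqref{VV2V} and $m_0(\nu^{\boxplus\beta})=\beta m_0(\nu)$, and in general it is the corresponding statement of \cite{Bryc-Hassairi-09}. I would also use that the free additive convolution power of a measure with support bounded from above again has support bounded from above, so that, together with Theorem~\ref{TH1}(i), both $\bigl(\nu^{\boxplus 1/\alpha}\bigr)^{\uplus\alpha}$ and $\bigl(\nu^{\uplus 1/\alpha}\bigr)^{\boxplus\alpha}$ have support bounded from above (the two intermediate measures are non-degenerate, since a degenerate power of $\nu$ would force $\nu$ itself to be degenerate).

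For \eqref{V1}, set $\mu=\nu^{\boxplus 1/\alpha}$, so that $\V_{\mu}(m)=\tfrac1\alpha\V_{\nu}(\alpha m)$ and $m_0(\mu)=\tfrac1\alpha m_0$. Applying \eqref{pseudovarnualpha1} (with exponent $\alpha$) to $\mu$, valid for $m$ near $\alpha m_0(\mu)=m_0$, gives
\[
\V_{\mu^{\uplus\alpha}}(m)=\alpha\,\V_{\mu}(m/\alpha)+m^2(1/\alpha-1)=\alpha\cdot\tfrac1\alpha\V_{\nu}(m)+m^2(1/\alpha-1),
\]
which is \eqref{V1}. Symmetrically, for \eqref{V2} put $\rho=\nu^{\uplus 1/\alpha}$; Theorem~\ref{TH1} with exponent $1/\alpha$ gives $\V_{\rho}(m)=\tfrac1\alpha\V_{\nu}(\alpha m)+m^2(\alpha-1)$ for $m$ near $m_0(\rho)=\tfrac1\alpha m_0$, and then the free rule above yields
\[
\V_{\rho^{\boxplus\alpha}}(m)=\alpha\,\V_{\rho}(m/\alpha)=\alpha\Bigl(\tfrac1\alpha\V_{\nu}(m)+\tfrac{m^2}{\alpha^2}(\alpha-1)\Bigr)=\V_{\nu}(m)+m^2(1-1/\alpha),
\]
which is \eqref{V2}. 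The iterated ``$m$ close enough to the base point'' conditions are compatible here precisely because under both $\boxplus\beta$ and $\uplus\beta$ the point $m_0$ is multiplied by $\beta$; hence $\bigl(\nu^{\boxplus 1/\alpha}\bigr)^{\uplus\alpha}$ and $\bigl(\nu^{\uplus 1/\alpha}\bigr)^{\boxplus\alpha}$ both have the same mean $m_0$ as $\nu$, and the relevant one-sided neighborhoods of $m_0$ all overlap.

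Finally, \eqref{v1} and \eqref{v2} follow by passing from pseudo-variance to variance through \eqref{VV2V}: assuming $m_0$ finite, all three measures share the mean $m_0$, so $V_{\bullet}(m)=\tfrac{m-m_0}{m}\V_{\bullet}(m)$, and multiplying \eqref{V1} and \eqref{V2} by $(m-m_0)/m$ turns the $m^2$ correction term into $m(m-m_0)$ while replacing $\V_{\nu}(m)$ by $V_{\nu}(m)$. I do not anticipate a genuine obstacle; the only points requiring care are the bookkeeping of the neighborhoods of the base points through the two successive convolutions, and invoking that free additive convolution preserves boundedness of support from above, for which I would lean on \cite{Bryc-Hassairi-09}.
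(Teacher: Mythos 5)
Your proposal is correct and follows essentially the same route as the paper: apply the boolean-power pseudo-variance formula of Theorem \ref{TH1} together with the free-power rule $\V_{\nu^{\boxplus\beta}}(m)=\beta\V_{\nu}(m/\beta)$ from \cite{Bryc-Hassairi-09} (plus the facts that both convolution powers preserve boundedness of the support from above), and then pass to variance functions via \eqref{VV2V}. The only difference is presentational: you spell out the second composition and the bookkeeping of means and neighborhoods, which the paper dispatches with ``the same arguments.''
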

\begin{proof}
If $\nu$ is a probability measure with support bounded from above, then $\nu^{\boxplus 1/\alpha}$, for $\alpha>0$, has also support bounded from above (see \cite[Proposition 3.10]{Bryc-Hassairi-09} or \cite[Proposition 6.1]{Bercovici-Voiculescu-93}). This implies that $\left(\nu^{\boxplus 1/\alpha}\right)^{\uplus \alpha}$ has support bounded from above. In addition
\begin{eqnarray*}
\V_{\left(\nu^{\boxplus 1/\alpha}\right)^{\uplus \alpha}}(m)
 & = & \alpha\V_{\nu^{\boxplus 1/\alpha}}(m/\alpha)+(1/\alpha-1)m^2\\
 & = & \V_{\nu}(m)+(1/\alpha-1)m^2.
 \end{eqnarray*}
 Furthermore, if $m_0<+\infty$, then the variance functions of the \CSK families  ${\mathcal{K}_{+}}(\nu)$ and ${\mathcal{K}_{+}}\left(\left(\nu^{\boxplus 1/\alpha}\right)^{\uplus \alpha}\right)$ exists and relation \eqref{v1} follows from \eqref{VV2V} and \eqref{V1}.

 The same arguments are used for probability measure $\left(\nu^{\uplus 1/\alpha}\right)^{\boxplus \alpha}$ to get formulas \eqref{V2} and \eqref{v2}.
\end{proof}
\subsection{Relation with Boolean Bercovici-Bata Bijection.}
Authors in \cite{Belinschi-Nica-08} consider the transformation $\mathbb{B}_t:\mathcal{M}\longmapsto \mathcal{M}$ defined by, for every $t\geq0$
\begin{equation}\label{Btmu}
\mathbb{B}_t(\mu)=\left(\mu^{\boxplus (1+t)}\right)^{\uplus \frac{1}{1+t}}, \ \ \ \ \ \mu\in \mathcal{M}.
\end{equation}
 They prove that for $t = 1$ the transformation $\mathbb{B}_1$ coincides with the canonical bijection $\mathbb{B} : \mathcal{M} \longmapsto \mathcal{M}_{Inf-div}$ discovered by Bercovici and Pata in their study of the relations between infinite divisibility in free and in Boolean probability. Here $\mathcal{M}_{Inf-div}$ stands for the set of probability distributions in $\mathcal{M}$ which are infinitely divisible with respect to the operation $\boxplus$. As a consequence, we have that $\mathbb{B}_t(\mu)$ is $\boxplus$-infinitely divisible for every $\mu\in \mathcal{M}$ and every $t \geq 1$. The following result gives the pseudo-variance function (and variance function in case of existence) of $\mathbb{B}_t(\mu)$. In fact this easily follows from \eqref{V1} and \eqref{v1} by choosing $\alpha=\frac{1}{1+t}$.

\begin{proposition}
Suppose $\V_{\nu}$ is the pseudo-variance function of the \CSK family ${\mathcal{K}_{+}}(\nu)$ generated by a non degenerate  probability measure $\nu$ with support bounded from above. For $t\geq0$, the probability measure
\begin{equation}\label{Btmu}
\mathbb{B}_t(\nu)=\left(\nu^{\boxplus (1+t)}\right)^{\uplus \frac{1}{1+t}}
\end{equation}
has support bounded from above and it generates the \CSK family with pseudo-variance function
\begin{equation}\label{VBtmu}
\V_{\mathbb{B}_t(\nu)}(m)=\V_{\nu}(m)+tm^2.
\end{equation}
Furthermore, if $m_0<+\infty$, then the variance functions of the \CSK families generated by $\nu$ and $\mathbb{B}_t(\nu)$ exists and
\begin{equation}\label{vBtmu}
V_{\mathbb{B}_t(\nu)}(m)=V_{\nu}(m)+tm(m-m_0).
\end{equation}
\end{proposition}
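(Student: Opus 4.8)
The plan is to specialize the previous proposition to the parameter $\alpha=\frac{1}{1+t}$. Since $t\geq 0$ we have $\alpha=\frac{1}{1+t}\in(0,1]$, so $\alpha>0$ and $1/\alpha=1+t\geq 1$. With this choice $\left(\nu^{\boxplus 1/\alpha}\right)^{\uplus\alpha}=\left(\nu^{\boxplus(1+t)}\right)^{\uplus\frac{1}{1+t}}=\mathbb{B}_t(\nu)$, and $1/\alpha-1=(1+t)-1=t$, so the stated formulas are exactly \eqref{V1} and \eqref{v1} read in these variables. The whole proof therefore reduces to checking that the hypotheses of the previous proposition are met for $\alpha=\frac{1}{1+t}$ and then substituting.

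First I would verify that $\mathbb{B}_t(\nu)$ is well defined with support bounded from above. The free additive convolution power $\nu^{\boxplus(1+t)}$ exists because the exponent $1+t$ is $\geq 1$, and by \cite[Proposition 3.10]{Bryc-Hassairi-09} (or \cite[Proposition 6.1]{Bercovici-Voiculescu-93}) it has support bounded from above; the Boolean power $\mu^{\uplus\frac{1}{1+t}}$ of any $\mu\in\mathcal{M}$ is always well defined since every probability measure is $\uplus$-infinitely divisible (\cite[Theorem 3.6]{Speicher-Woroudi-93}), and by Theorem \ref{TH1}(i) it again has support bounded from above. Hence $\mathbb{B}_t(\nu)$ is a probability measure with support bounded from above, as asserted, and the previous proposition applies with $\alpha=\frac{1}{1+t}$.

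Then I would substitute. From \eqref{V1}, for $m$ close enough to $m_0$,
$$\V_{\mathbb{B}_t(\nu)}(m)=\V_{\left(\nu^{\boxplus 1/\alpha}\right)^{\uplus\alpha}}(m)=\V_{\nu}(m)+(1/\alpha-1)m^2=\V_{\nu}(m)+t\,m^2,$$
which is \eqref{VBtmu}. If moreover $m_0<+\infty$, then by the previous proposition the variance functions of the CSK families generated by $\nu$ and by $\mathbb{B}_t(\nu)$ exist, and \eqref{v1} gives, for $m$ close enough to $m_0$,
$$V_{\mathbb{B}_t(\nu)}(m)=V_{\nu}(m)+(1/\alpha-1)m(m-m_0)=V_{\nu}(m)+t\,m(m-m_0),$$
which is \eqref{vBtmu}.

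There is essentially no genuine obstacle here; the only points that deserve a word of care are that the exponent $1+t$ entering the free convolution power is $\geq 1$ (so $\nu^{\boxplus(1+t)}$ makes sense and keeps support bounded from above), that $\alpha=\frac{1}{1+t}$ indeed lies in the admissible range $\alpha>0$ of the previous proposition, and the degenerate instance $t=0$, where $\mathbb{B}_0(\nu)=\nu$ and both formulas collapse to trivial identities.
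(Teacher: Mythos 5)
Your proof is correct and follows exactly the paper's route: the paper also obtains this proposition by specializing \eqref{V1} and \eqref{v1} to $\alpha=\frac{1}{1+t}$, noting that $1/\alpha-1=t$. Your added verification that $\nu^{\boxplus(1+t)}$ and its Boolean power are well defined with support bounded from above is a harmless elaboration of what the paper leaves implicit.
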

Denote by  $\mathcal{V}$ the class of variance functions corresponding to
probability measures $\nu$ such that $\nu$ is compactly supported, centered: $\int x\nu(dx)=0$, with variance  $\int
x^2\nu(dx)=1$, so that $V_{\nu}(0)=1$.
Denote by $\mathcal{V}_{\infty}$ the class of those $V\in\mathcal{V}$ that the function
 $m\mapsto V(cm)$  is in $\mathcal{V}$ for every real $c$.
It was proved in \cite[Corollary 1.1]{Bryc-Raouf-Wojciech-18}, that the map $V(m)\mapsto V(m)-m^2$ is a bijection
of $\mathcal{V}_{\infty}$ onto $\mathcal{V}$ (also the map $V_{\nu}(m)\longmapsto V_{\nu}(m)+m^2$ is the inverse bijection). We will see that this bijection between variance functions coincide with the boolean Bercovici-Bata bijection.
\begin{proposition}
Suppose $V_{\nu}(.)$ is the variance function of the \CSK family  generated by a non degenerate  probability measure $\nu$ with mean 0 and variance 1. For $\alpha>0$ such that probability measures $\left(\nu^{\boxplus 1/\alpha}\right)^{\uplus \alpha}$ and $\left(\nu^{\uplus 1/\alpha}\right)^{\boxplus \alpha}$ are well defined, we have
\begin{itemize}
 \item[(i)] The bijection $V_{\nu}(m)\longmapsto V_{\nu}(m)+m^2$ from $\mathcal{V}$  onto  $\mathcal{V}_{\infty}$ correspond to Boolean Bercovici-Bata bijection  between probability measures $\nu\longmapsto \mathbb{B}_1(\nu)$, in addition
 \begin{equation}\label{limit1}
 \left(\nu^{\uplus 1/\alpha}\right)^{\boxplus \alpha} \xrightarrow{\alpha\to +\infty} \mathbb{B}_1(\nu), \ \ \ \ \mbox{in distribution}.
 \end{equation}
  \item[(ii)] The bijection $V_{\nu}(m)\longmapsto V_{\nu}(m)-m^2$ from $\mathcal{V}_{\infty}$ onto $\mathcal{V}$  correspond to the inverse Boolean Bercovici-Bata bijection  between probability measures $\nu\longmapsto \mathbb{B}_1^{-1}(\nu)$, in addition
 \begin{equation}\label{limit2}
 \left(\nu^{\boxplus 1/\alpha}\right)^{\uplus \alpha} \xrightarrow{\alpha\to +\infty} \mathbb{B}_1^{-1}(\nu), \ \ \ \ \mbox{in distribution}.
 \end{equation}
 \end{itemize}
\end{proposition}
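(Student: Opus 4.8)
The plan is to read both bijections off the variance-function identities \eqref{vBtmu}, \eqref{v1} and \eqref{v2} already established, and to obtain the two convergence-in-distribution statements by upgrading pointwise convergence of variance functions to convergence of Cauchy transforms through the reconstruction formulas \eqref{z2m}--\eqref{G2V}, followed by a normal-families argument.

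For part (i), I would begin from the fact that $\nu$ is centered, so $m_0(\nu)=0$ and \eqref{vBtmu} with $t=1$ becomes $V_{\mathbb{B}_1(\nu)}(m)=V_{\nu}(m)+m^2$ for $m$ near $0$. By \cite[Corollary 1.1]{Bryc-Raouf-Wojciech-18}, $V_{\nu}\in\mathcal{V}$ forces $V_{\nu}+m^2\in\mathcal{V}_{\infty}$, hence $V_{\nu}+m^2$ is the variance function of a unique compactly supported, centered, variance-$1$ measure; since a \CSK family, and therefore its generating measure with support bounded from above, is determined by its variance function, that measure must be $\mathbb{B}_1(\nu)$. Thus, after translating variance functions into their generating measures, the bijection $V_{\nu}\mapsto V_{\nu}+m^2$ from $\mathcal{V}$ onto $\mathcal{V}_{\infty}$ is exactly $\nu\mapsto\mathbb{B}_1(\nu)$, and $\mathbb{B}_1=\mathbb{B}$ is the Bercovici--Pata bijection by \cite{Belinschi-Nica-08}. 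For the limit \eqref{limit1}, \eqref{v2} with $m_0=0$ gives, for every admissible $\alpha$ and $m$ in a fixed neighbourhood of $0$,
\[
V_{\left(\nu^{\uplus 1/\alpha}\right)^{\boxplus \alpha}}(m)=V_{\nu}(m)+(1-1/\alpha)\,m^2,
\]
which converges, uniformly on that neighbourhood, to $V_{\nu}(m)+m^2=V_{\mathbb{B}_1(\nu)}(m)$ as $\alpha\to+\infty$. Writing $z(m)=m+V(m)/m$ and $G(z(m))=m/V(m)$ (legitimate because all measures here are centered, so $\V=V$), this pointwise convergence of variance functions forces $G_{\left(\nu^{\uplus 1/\alpha}\right)^{\boxplus \alpha}}\to G_{\mathbb{B}_1(\nu)}$ on a complex neighbourhood of $\infty$; since each of these measures has second moment $1$, Chebyshev's inequality makes the family tight, so by Montel's and Vitali's theorems the convergence of Cauchy transforms propagates to all of $\mathbb{C}^{+}$, and the continuity theorem for Cauchy transforms yields $\left(\nu^{\uplus 1/\alpha}\right)^{\boxplus \alpha}\to\mathbb{B}_1(\nu)$ in distribution.

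Part (ii) is the mirror image. Here $\mathbb{B}_1^{-1}(\nu)=\left(\nu^{\uplus 2}\right)^{\boxplus 1/2}$ (because $\boxplus$- and $\uplus$-powers each multiply), and \eqref{v2} specialized at $\alpha=1/2$ with $m_0=0$ gives it variance function $V_{\nu}-m^2$; for $\nu$ with $V_{\nu}\in\mathcal{V}_{\infty}$, \cite[Corollary 1.1]{Bryc-Raouf-Wojciech-18} places $V_{\nu}-m^2$ in $\mathcal{V}$, and the same uniqueness argument identifies the bijection $V_{\nu}\mapsto V_{\nu}-m^2$ from $\mathcal{V}_{\infty}$ onto $\mathcal{V}$ with $\nu\mapsto\mathbb{B}_1^{-1}(\nu)$. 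For \eqref{limit2} I would use \eqref{v1} with $m_0=0$, namely $V_{\left(\nu^{\boxplus 1/\alpha}\right)^{\uplus \alpha}}(m)=V_{\nu}(m)+(1/\alpha-1)m^2\longrightarrow V_{\nu}(m)-m^2=V_{\mathbb{B}_1^{-1}(\nu)}(m)$ as $\alpha\to+\infty$ along admissible $\alpha$, and then repeat verbatim the tightness plus Cauchy-transform continuity argument of part (i).

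The algebra is immediate from the earlier propositions; the substantive point is the passage from pointwise convergence of the variance functions — which a priori live only on small, possibly shrinking neighbourhoods of the mean and encode $G$ only near $\infty$ — to weak convergence of the measures. What makes this go through is the uniform second-moment bound: it both pins down a common neighbourhood of $0$ on which the variance functions are controlled (via $z(m)\to\infty$ as $m\to0$) and supplies tightness, after which the normal-families argument and the standard Cauchy-transform continuity theorem close the gap. I expect this analytic step, together with checking that $\left(\nu^{\uplus 1/\alpha}\right)^{\boxplus \alpha}$ and $\left(\nu^{\boxplus 1/\alpha}\right)^{\uplus \alpha}$ remain in the relevant classes (compactly supported, centered, variance $1$) along the sequence, to be the only real obstacle.
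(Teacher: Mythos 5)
Your proposal follows essentially the same route as the paper: you read the bijection off \eqref{vBtmu} with $t=1$ and $m_0=0$ (equivalently \eqref{v1}--\eqref{v2} at the appropriate $\alpha$), and you obtain the limits \eqref{limit1} and \eqref{limit2} from the pointwise convergence of the variance functions $V_{\left(\nu^{\uplus 1/\alpha}\right)^{\boxplus \alpha}}(m)=V_{\nu}(m)+(1-1/\alpha)m^2$ and $V_{\left(\nu^{\boxplus 1/\alpha}\right)^{\uplus \alpha}}(m)=V_{\nu}(m)+(1/\alpha-1)m^2$, exactly as the paper does. The only difference is that you make explicit, via the reconstruction formulas \eqref{z2m}--\eqref{G2V}, the uniform second-moment bound, tightness, and the continuity theorem for Cauchy transforms, the passage from convergence of variance functions to convergence in distribution --- a step the paper states with a bare ``which implies'' --- so this is added detail rather than a different method.
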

\begin{proof}
(i) It is clear from \eqref{vBtmu} that the bijection between variance functions $V_{\nu}(m)\longmapsto V_{\nu}(m)+m^2$ is in fact the celebrated Boolean Bercovici-Bata bijection $\mathbb{B}_1(.)$. This bijection is given by the explicit formula between probability measures $\nu\longmapsto \mathbb{B}_1(\nu)=\left(\nu^{\boxplus 2}\right)^{\uplus \frac{1}{2}}$. On the other hand, one see from \eqref{v2} that
$$\displaystyle\lim_{\alpha\rightarrow +\infty}V_{\left(\nu^{\uplus 1/\alpha}\right)^{\boxplus \alpha}}(m)=V_{\nu}(m)+m^2=V_{\mathbb{B}_1(\nu)}(m),$$
which implies \eqref{limit1}.

(ii) The inverse Boolean Bercovici-Bata bijection is also explicit and takes two forms: %if $\nu$ is $\boxplus$-infinitely divisible then
the  bijection between variance functions $V_{\nu}(m)\longmapsto V_{\nu}(m)-m^2$ correspond to inverse Boolean Bercovici-Bata bijection between probability measures $\nu\longmapsto\mathbb{B}_1^{-1}(\nu)=\left(\nu^{\uplus 2}\right)^{\boxplus \frac{1}{2}}$. On the other hand, one see from \eqref{v1} that
$$\displaystyle\lim_{\alpha\rightarrow +\infty}V_{\left(\nu^{\boxplus 1/\alpha}\right)^{\uplus \alpha}}(m)=V_{\nu}(m)-m^2=V_{\mathbb{B}_1^{-1}(\nu)}(m),$$
which implies \eqref{limit2}.

\end{proof}
\section{Boolean cumulants and variance functions}
If $\nu$ is a compactly supported probability measure on the real line, the $K$-transform $K_{\nu}$ of $\nu$ admit a Laurent expansion. From \cite{Speicher-Woroudi-93}, one sees that
\begin{equation}\label{formula3}
K_{\nu}(z)=\sum_{n=1}^{\infty}r_n(\nu)\frac{1}{z^{n-1}}.
\end{equation}
The coefficients $r_n=r_n(\nu)$ are called the boolean cumulants of the measure $\nu$. In particular $r_0=0$, $r_1=\int x \nu(dx)=m_0$. The following result gives the connection between boolean cumulants and variance functions of \CSK families.
\begin{theorem}\label{proposition3}
Suppose $V_{\nu}$ is analytic in a neighborhood of $m_0$, $V_{\nu} (m_0) > 0$,
and $\nu$ is a probability measure with finite all moments, such that $\int x\nu(dx) =
m_0$. Then the following conditions are equivalent.
\begin{itemize}
 \item[(i)] $\nu$ is non degenerate, compactly supported and there exists an interval $(A,B)\ni m_0$ such that $\{Q_{(m,\nu)}(dx)=f_{\nu}(x,m)\nu(dx) \ :\  m\in(A,B)\}$, with $f_{\nu}(x,m)$ given by \eqref{F(V)}, define a family of probability measures parameterized by the mean with variance function $V_{\nu}(.)$
 \item[(ii)] The boolean cumulants of the measure $\nu$ are $r_0=0$, $r_1=m_0$ and for all $n\geq 1$
\begin{equation}\label{formula4}
r_{n+1}=\frac{1}{n!}\left.\frac{d^{n-1}}{dm^{n-1}}\left(V_{\nu}(m)+m(m-m_0)\right)^n\right|_{m=m_0}.
\end{equation}
\end{itemize}
\end{theorem}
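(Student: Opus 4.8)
The plan is to reduce the whole equivalence to one functional identity and to the Lagrange--Bürmann inversion formula. Write $\Phi(m):=V_{\nu}(m)+m(m-m_0)$; by hypothesis $\Phi$ is analytic in a neighbourhood of $m_0$ with $\Phi(m_0)=V_{\nu}(m_0)>0$, and one checks at once that $m+\dfrac{V_{\nu}(m)}{m-m_0}=\dfrac{\Phi(m)}{m-m_0}$. The identity at the heart of the proof is
\begin{equation}\label{starid}
K_{\nu}\!\left(m+\frac{V_{\nu}(m)}{m-m_0}\right)=m
\end{equation}
for $m$ in a one-sided neighbourhood of $m_0$. Setting $t=1/z$, \eqref{starid} is exactly the assertion that the substitution $t=(m-m_0)/\Phi(m)$ --- legitimate since $\Phi(m_0)\neq0$ --- inverts the Laurent expansion $K_{\nu}(1/t)=\sum_{n\ge1}r_nt^{\,n-1}$; hence, by Lagrange--Bürmann applied with $H(m)=m$, the coefficient of $t^{n}$ in the inverse function $m=K_{\nu}(1/t)$ equals $\dfrac{1}{n!}\left.\dfrac{d^{n-1}}{dm^{n-1}}\Phi(m)^{n}\right|_{m=m_0}$, and that coefficient is $r_{n+1}$. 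So \eqref{starid} and \eqref{formula4} are equivalent, provided \eqref{starid} is known to hold as an identity of genuine analytic functions.

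For (i)$\Rightarrow$(ii): by (i) the measure $\nu$ is compactly supported, so $K_{\nu}$ has the Laurent expansion \eqref{formula3} (and is analytic near $\infty$), and $V_{\nu}$ is the variance function, so $\V_{\nu}(m)=mV_{\nu}(m)/(m-m_0)$ by \eqref{VV2V}. Then Proposition \ref{proposition}(ii) reads $K_{\nu}\bigl(m+\V_{\nu}(m)/m\bigr)=m$, which is precisely \eqref{starid}. The Lagrange--Bürmann computation above now yields \eqref{formula4}, and $r_0=0$, $r_1=m_0$ are the facts recorded right after \eqref{formula3}.

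For (ii)$\Rightarrow$(i): formula \eqref{formula4} says that the numbers $r_{n+1}$ are precisely the Taylor coefficients of the unique formal power series $W$ solving $W=t\,\Phi(m_0+W)$; hence $W(t):=\sum_{n\ge1}r_{n+1}t^{n}$ solves this equation, and rewriting it via $m=m_0+W(t)$, $z=1/t$ and $W(t)=\eta_{\nu}(t)/t-r_1$ with $\eta_{\nu}(t)=tK_{\nu}(1/t)=\sum_{n\ge1}r_nt^{n}$ turns it into \eqref{starid}. Next, since $\Phi$ is bounded by some $M$ on a disc of radius $\rho>0$ about $m_0$, Cauchy's estimates give $|r_{n+1}|=\tfrac1n\bigl|[w^{n-1}]\Phi(m_0+w)^{n}\bigr|\le \tfrac1n M^{n}\rho^{-(n-1)}$, so $\limsup_n|r_n|^{1/n}<\infty$; by the boolean moment--cumulant relation $\sum_{n\ge1}m_nw^{n}=\eta_{\nu}(w)/(1-\eta_{\nu}(w))$ the moment generating series then has positive radius of convergence, which forces $\operatorname{supp}\nu$ to be bounded. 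The measure $\nu$ is non-degenerate, since otherwise $K_{\nu}$ would be constant and $r_2$ would vanish, contradicting $r_2=V_{\nu}(m_0)>0$ (the $n=1$ case of \eqref{formula4}). Thus $\nu$ generates the two-sided \CSK family on a domain of means containing $m_0$, with pseudo-variance function $\V_{\nu}$ characterised by $K_{\nu}\bigl(m+\V_{\nu}(m)/m\bigr)=m$ on the branch $z>b$ (Proposition \ref{proposition}(ii)); comparing this with \eqref{starid}, and using that $z(m)=m+V_{\nu}(m)/(m-m_0)\to+\infty$ as $m\to m_0^{+}$ lies on that branch while $K_{\nu}$ is strictly monotone there (Proposition \ref{proposition}(i)), we obtain $\V_{\nu}(m)/m=V_{\nu}(m)/(m-m_0)$, i.e.\ by \eqref{VV2V} the variance function of this family is $V_{\nu}$. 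Taking $(A,B)$ to be a subinterval of the domain of means containing $m_0$ gives (i), with $\{Q_{(m,\nu)}(dx)=f_{\nu}(x,m)\nu(dx)\}$ the \CSK parametrisation \eqref{F(V)}.

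The main obstacle is the converse direction, specifically the passage from the purely formal identity produced by Lagrange inversion to an identity of honest analytic functions near $m_0$ (equivalently, to knowing $K_{\nu}$ is analytic at $\infty$): this is why the exponential bound on the $r_n$ and the ensuing compactness of $\operatorname{supp}\nu$, obtained through the moment--cumulant relation, are doing the essential work. A second, smaller point needing care is that \eqref{starid} only pins down $\V_{\nu}$ after one checks that $z(m)$ lies on the branch $(b,\infty)$ where $K_{\nu}$ is injective, which is supplied by Proposition \ref{proposition}(i) together with $z(m)\to\infty$ as $m\to m_0^{+}$.
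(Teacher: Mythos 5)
Your proof of (i)$\Rightarrow$(ii) is essentially the paper's: both of you start from $K_{\nu}\bigl(m+V_{\nu}(m)/(m-m_0)\bigr)=m$, obtained from Proposition \ref{proposition}(ii) together with \eqref{VV2V}, and then read off \eqref{formula4} by Lagrange--B\"urmann inversion applied to $\phi(m)=V_{\nu}(m)+m(m-m_0)$. For (ii)$\Rightarrow$(i), however, you take a genuinely different route. The paper rewrites \eqref{formula4} (via \eqref{vBtmu} with $t=1$) as the Lagrange formula for the free cumulants of $\mathbb{B}_1(\nu)=(\nu^{\boxplus 2})^{\uplus 1/2}$, invokes \cite[Theorem 3.3]{Bryc-06-08} to conclude that $\mathbb{B}_1(\nu)$ is compactly supported and generates a CSK family with variance function $V_{\nu}(m)+m(m-m_0)$, and then pulls this back through the inverse Bercovici--Pata bijection. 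You instead argue directly: Cauchy estimates on $\frac1n[w^{n-1}]\phi(m_0+w)^n$ give exponential bounds on the boolean cumulants, the boolean moment--cumulant relation $\sum_{n\ge1}m_nw^n=\eta_{\nu}(w)/(1-\eta_{\nu}(w))$ transfers these bounds to the moments and forces compact support, Lagrange inversion then upgrades \eqref{formula4} to the genuine analytic identity $K_{\nu}\bigl(m+V_{\nu}(m)/(m-m_0)\bigr)=m$ near $m_0$, and the strict monotonicity of $K_{\nu}$ on $(b,\infty)$ (Proposition \ref{proposition}(i)--(ii)) identifies $V_{\nu}$ with the variance function of $\mathcal{K}_+(\nu)$. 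Your version is self-contained (no appeal to $\mathbb{B}_1$ or to the free-cumulant theorem) and it also sidesteps a wrinkle in the paper's argument, namely that \eqref{vBtmu} is applied at a stage where it is not yet known that $\nu$ generates a CSK family with variance function $V_{\nu}$; the price is that you must supply the compactness and analyticity bookkeeping that the paper outsources to \cite{Bryc-06-08}.

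One loose end to tidy: your monotonicity argument only yields $V_{\nu}=V_{\nu}^{\mathrm{actual}}$ on a right neighbourhood $(m_0,m_0+\varepsilon)$, whereas (i) asks for an interval $(A,B)\ni m_0$. This is easily repaired --- either run the same argument on the left branch $\mathcal{K}_-(\nu)$ (the Laurent identity $K_{\nu}(1/t)=m_0+W(t)$ is equally valid for small $t<0$, where $z\to-\infty$), or note that the variance function of a compactly supported non-degenerate generating measure is real-analytic on its two-sided domain of means, so agreement on $(m_0,m_0+\varepsilon)$ propagates to a two-sided neighbourhood of $m_0$ by the identity theorem --- but as written the final sentence of your converse asserts more than you have proved.
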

\begin{proof}
$(i) \Rightarrow (ii) $ From Proposition \ref{proposition} (ii) and relation \eqref{VV2V}, one see that for all $m\in(m_0,m_+)$
\begin{equation}\label{formula5}
K_{\nu}(m+V_{\nu}(m)/(m-m_0))=m.
\end{equation}
The $L$-transformation $L(z)=K_{\nu}(1/z)$ has a Taylor expansion given by
\begin{equation}\label{formulaL}
L(z)=\sum_{n=1}^{\infty}r_n(\nu)z^{n-1}.
\end{equation}
On the other hand, the Lagrange expansion theorem says that if $\phi(z)$ is analytic in a neighborhood of $z=m_0$, $\phi(m_0)\neq 0$ and $\xi:=(m-m_0)/\phi(m)$ then
\begin{equation}\label{formulaL1}
L(\xi)=m_0+\sum_{n=1}^{\infty}\frac{\xi^n}{n!}\left.\frac{d^{n-1}}{dm^{n-1}}(\phi(m))^n\right|_{m=m_0}
\end{equation}
For $\phi(m)=V_{\nu}(m)+m(m-m_0)$, $\xi(m):=(m-m_0)/\phi(m)=\frac{1}{m+V_{\nu}(m)/(m-m_0)}$, equation \eqref{formulaL} is
\begin{equation}\label{formulaL2}
L(\xi(m))=\sum_{n=1}^{\infty}r_n(\nu)(\xi(m))^{n-1}.
\end{equation}
and equation \eqref{formulaL1} becomes
\begin{equation}\label{formulaL3}
L(\xi(m))=m_0+\sum_{n=1}^{\infty}\frac{(\xi(m))^n}{n!}\left.\frac{d^{n-1}}{dm^{n-1}}(V_{\nu}(m)+m(m-m_0))^n\right|_{m=m_0}
\end{equation}
An identification between \eqref{formulaL2} and \eqref{formulaL3} implies \eqref{formula4}.

$(ii) \Rightarrow (i) $ Using \eqref{vBtmu}, for $t=1$, the boolean cumulants of $\nu$ given by \eqref{formula4} can be written as
\begin{equation}\label{formula5}
r_{n+1}=\frac{1}{n!}\left.\frac{d^{n-1}}{dm^{n-1}}\left(V_{\mathbb{B}_1(\nu)}(m)\right)^n\right|_{m=m_0}.
\end{equation}
According to \cite[Theorem 3.3]{Bryc-06-08}, the boolean cumulants $r_{n+1}$ of $\nu$ are in fact the free cumulants of the probability measure $\mathbb{B}_1(\nu)$. This implies that $\mathbb{B}_1(\nu)$ is non degenerate, compactly supported and there exists an interval $(\alpha,\beta)\ni m_0$ such that $\{Q_{(m,\mathbb{B}_1(\nu))}(dx)=f_{\mathbb{B}_1(\nu)}(x,m)\mathbb{B}_1(\nu)(dx) \ :\  m\in(\alpha,\beta)\}$ define a family of probability measures parameterized by the mean with variance function $V_{\mathbb{B}_1(\nu)}(.)$. Applying the inverse Boolean Bercovici-Bata bijection to $\mathbb{B}_1(\nu)$, then the probability measure $\nu=\mathbb{B}_1^{-1}\left(\mathbb{B}_1(\nu)\right)$ is non degenerate, compactly supported and there exists an interval $(A,B)\ni m_0$ such that $\{Q_{(m,\nu)}(dx)=f_{\nu}(x,m)\nu(dx) \ :\  m\in(A,B)\}$ define a family of probability measures parameterized by the mean with variance function $V_{\nu}(.)$.
\end{proof}
 As pointed in the introduction the class of quadratic \CSK families is described in \cite[Theorem 3.2]{Bryc-06-08}. This class consists of the free Meixner distributions.

We now relate  boolean cumulants of the Marchenko Pastur distribution to Catalan numbers. The centered Marchenko-Pastur  distribution is given by
 $$\nu(dx)=\displaystyle\frac{\sqrt{4-(x-a)^2}}{2\pi(1+ax)}\textbf{1}_{(a-2,a+2)}(x)dx+p_1\delta_{x_1}.$$
 The discrete part is absent except for $a^2>1$, in this case $p_1=1-1/a^2$ and $x_1=-1/a$.
 It generates the  \CSK family with variance function
$V_{\nu}(m)=1+am={\V}_{\nu}(m)$.

\begin{corollary}
If $\nu$ is the centered standardized  Marchenko Pastur distribution with parameter $a=2$, i.e. it generates the \CSK family with $m_0=0$ and variance function $V_{\nu}(m)=1+2m$, then its boolean cumulants are $r_0=0$, $r_1=m_0=0$ and for $n\geq1$,
\begin{equation}
r_{n+1}(\nu)=\frac{1}{1+n}\left(
                       \begin{array}{c}
                         2n \\
                         n \\
                       \end{array}
                     \right).
\end{equation}
\end{corollary}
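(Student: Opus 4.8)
The plan is to read off the boolean cumulants directly from Theorem~\ref{proposition3}. First I would check that its hypotheses are met by $\nu$, the centered Marchenko--Pastur law with $a=2$: being compactly supported, $\nu$ has all moments finite; its mean is $m_0=0$; and $V_\nu(m)=1+2m$ is a polynomial, hence analytic in a neighbourhood of $m_0=0$, with $V_\nu(0)=1>0$. As recalled just above the statement, this $\nu$ generates a \CSK family parameterised by the mean with variance function $V_\nu$, so condition~(i) of Theorem~\ref{proposition3} holds; therefore condition~(ii) applies, giving $r_0=0$, $r_1=m_0=0$, and the formula \eqref{formula4} for the remaining $r_{n+1}$.

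The key observation is that, because $m_0=0$, the polynomial inside \eqref{formula4} is a perfect square:
\[
V_\nu(m)+m(m-m_0)=1+2m+m^2=(1+m)^2 ,
\]
so that for $n\ge 1$
\[
r_{n+1}(\nu)=\frac{1}{n!}\left.\frac{d^{\,n-1}}{dm^{\,n-1}}(1+m)^{2n}\right|_{m=0}.
\]
After this reduction the computation is elementary: differentiating $(1+m)^{2n}$ a total of $n-1$ times produces $\frac{(2n)!}{(n+1)!}(1+m)^{n+1}$, which equals $\frac{(2n)!}{(n+1)!}$ at $m=0$, whence
\[
r_{n+1}(\nu)=\frac{1}{n!}\cdot\frac{(2n)!}{(n+1)!}=\frac{(2n)!}{n!\,(n+1)!}=\frac{1}{n+1}\binom{2n}{n},
\]
the $n$-th Catalan number, as asserted.

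There is essentially no obstacle here: the only points that need care are verifying the hypotheses of Theorem~\ref{proposition3} (so that the closed form \eqref{formula4} may be used) and spotting the simplification $V_\nu(m)+m^2=(1+m)^2$, after which the derivative is trivial. As an independent sanity check one could instead compute $K_\nu$ explicitly: by \eqref{z2m} and Proposition~\ref{proposition}(ii), $K_\nu$ is the functional inverse of $m\mapsto (1+m)^2/m$, so $L(u)=K_\nu(1/u)=\frac{1-2u-\sqrt{1-4u}}{2u}$, and expanding this with the Catalan generating function $\sum_{n\ge 0}\frac{1}{n+1}\binom{2n}{n}u^n=\frac{1-\sqrt{1-4u}}{2u}$ recovers the same coefficients $r_{n+1}$.
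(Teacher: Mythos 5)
Your proposal is correct and follows essentially the same route as the paper: apply \eqref{formula4} from Theorem~\ref{proposition3}, note that $V_{\nu}(m)+m(m-m_0)=(1+m)^2$ so the $n$-th power is $(1+m)^{2n}$, and evaluate the $(n-1)$-st derivative at $m=0$ to obtain $\frac{1}{n+1}\binom{2n}{n}$. The explicit verification of the hypotheses and the generating-function sanity check via $K_{\nu}$ are nice additions but do not change the argument.
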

\begin{proof}
From \eqref{formula4}, for $a=2$, the boolean cumulants of $\nu$ are
$$r_{n+1}(\nu)=\frac{1}{n!}\left.\frac{d^{n-1}}{dm^{n-1}}(1+m)^{2n}\right|_{m=0}=\frac{2n(2n-1)(2n-2).....(n+2)}{n!}=\frac{1}{n+1}\frac{(2n)!}{(n!)^2}.$$
\end{proof}
Authors in \cite{Bryc-Raouf-Wojciech-18} introduce variance functions that are polynomial in the mean of arbitrary degree. In particular a complete resolution of compactly supported \CSK with cubic variance function is given (see \cite[Theorem 1.2]{Bryc-Raouf-Wojciech-18}): Fix $a,b,c\in\mathbb{R}$. A cubic function \begin{equation}\label{cubicV}
V(m)=1+am+bm^2 +cm^3
\end{equation}
 is in  $\mathcal{V}$ if and only if $(b+1)^3\geq 27 c^2$.
Furthermore,  $V$ is in $\mathcal{V}_\infty$ if and only if $b^3\geq 27 c^2$.

We now relate  boolean cumulants of certain probability distribution to Fuss-Catalan numbers.
\begin{corollary}
The function $V(m)=1+3m+2m^2+m^3$ is the variance function the \CSK family generated by a compactly supported probability measure $\nu$, with mean $0$, variance 1 and with boolean cumulants given by:
\begin{equation}
r_{n+1}(\nu)=\frac{1}{3n+1}\left(
                       \begin{array}{c}
                         3n+1 \\
                         n \\
                       \end{array}
                     \right).
\end{equation}
\end{corollary}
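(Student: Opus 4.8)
The plan is to first check that the cubic $V(m)=1+3m+2m^2+m^3$ is genuinely a variance function, so that a generating measure $\nu$ exists, and then to read off its boolean cumulants from Theorem~\ref{proposition3}. Writing $V(m)=1+am+bm^2+cm^3$ with $a=3$, $b=2$, $c=1$, the description of compactly supported cubic \CSK families in \cite[Theorem 1.2]{Bryc-Raouf-Wojciech-18} states that $V\in\mathcal{V}$ if and only if $(b+1)^3\ge 27c^2$; here $(b+1)^3=27=27c^2$, so the condition holds (in the boundary case of equality). Hence there is a compactly supported probability measure $\nu$, centered and with variance $1$, whose one-sided \CSK family has variance function $V_\nu=V$. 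Since $m_0=0$ we have $\V_\nu=V_\nu$, and the normalization of boolean cumulants gives $r_0=0$ and $r_1=m_0=0$.

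Next I would apply the implication $(i)\Rightarrow(ii)$ of Theorem~\ref{proposition3}. With $m_0=0$, formula~\eqref{formula4} becomes, for $n\ge 1$,
$$r_{n+1}(\nu)=\frac{1}{n!}\left.\frac{d^{n-1}}{dm^{n-1}}\bigl(V_\nu(m)+m^2\bigr)^n\right|_{m=0}.$$
The crucial point is the algebraic identity
$$V_\nu(m)+m^2=1+3m+3m^2+m^3=(1+m)^3$$
(equivalently, by \eqref{vBtmu} with $t=1$, the Bercovici--Pata transform $\mathbb{B}_1(\nu)$ has variance function $(1+m)^3$). Then $\bigl(V_\nu(m)+m^2\bigr)^n=(1+m)^{3n}$, and a single differentiation gives $\frac{d^{n-1}}{dm^{n-1}}(1+m)^{3n}=\frac{(3n)!}{(2n+1)!}(1+m)^{2n+1}$, whose value at $m=0$ is $(3n)!/(2n+1)!$.

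Combining these would give
$$r_{n+1}(\nu)=\frac{(3n)!}{n!\,(2n+1)!}=\frac{1}{3n+1}\binom{3n+1}{n},$$
the last equality because $\binom{3n+1}{n}=(3n+1)!/(n!\,(2n+1)!)$, which is the claimed Fuss--Catalan number. I do not expect any serious obstacle here: the only points needing a little care are that the criterion of \cite{Bryc-Raouf-Wojciech-18} still produces a genuine generating measure in the equality case, and the elementary binomial rewriting in the last display. A slightly different route would be to note that $\mathbb{B}_1(\nu)$ has variance function $(1+m)^3$ and then invoke \cite[Theorem 3.3]{Bryc-06-08} to identify the $r_{n+1}(\nu)$ with the free cumulants of that (free-Bessel/Fuss--Catalan) law, but the direct derivative computation is shorter.
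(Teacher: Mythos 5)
Your proposal is correct and follows essentially the same route as the paper: invoke the cubic criterion $(b+1)^3\ge 27c^2$ from \cite[Theorem 1.2]{Bryc-Raouf-Wojciech-18} to get existence of the compactly supported generating measure, then apply \eqref{formula4} with $V(m)+m^2=(1+m)^3$ and differentiate $(1+m)^{3n}$ to obtain $r_{n+1}=\frac{(3n)!}{n!\,(2n+1)!}=\frac{1}{3n+1}\binom{3n+1}{n}$. Your write-up merely makes explicit the factorization $(1+m)^3$ and the boundary-case check of the criterion, both of which the paper leaves implicit.
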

\begin{proof}
The function $V(m)=1+3m+2m^2+m^3$ correspond to \eqref{cubicV} with $a=3$, $b=2$ and $c=1$. In this case we have $(b+1)^3\geq 27 c^2$ and this implies that the function $V(m)$ is the variance function the \CSK family generated by a compactly supported probability measure $\nu$, with mean $0$ and variance 1. Furthermore,
from \eqref{formula4}, the boolean cumulants of $\nu$ are the Fuss Catalan numbers of order 3, that is
$$r_{n+1}(\nu)=\frac{1}{n!}\left.\frac{d^{n-1}}{dm^{n-1}}(1+m)^{3n}\right|_{m=0}=\frac{3n(3n-1)(3n-2).....(2n+2)}{n!}=\frac{1}{3n+1}\frac{(3n+1)!}{(2n+1)!(n!)}.$$
\end{proof}
Combining \eqref{formula4} with the $\uplus$-L\'{e}vy-Khinchin formula, we get also the following.
\begin{corollary}
Suppose $V(.)$ is analytic at $0$, $V(0)=1$. Then the following conditions are equivalent.
\begin{itemize}
 \item[(i)] There exists a   probability measure $\nu$ with mean 0 and variance 1 such
that $V(.)$ is the variance function of the \CSK family generated
by $\nu$.
 \item[(ii)] There exists a finite positive measure $\rho$ such that
 \begin{equation}\label{BC}
 \frac{1}{n!}\left.\frac{d^{n-1}}{dm^{n-1}}\left(V(m)+m^2\right)^n\right|_{m=0}=\int x^{n-1} \rho (dx), \ \ n\geq 1.
 \end{equation}
 \end{itemize}
\end{corollary}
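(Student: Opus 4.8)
\emph{Plan.} This is a corollary of Theorem~\ref{proposition3} fed through the $\uplus$-L\'evy-Khinchin representation. Theorem~\ref{proposition3} converts condition (i) into an explicit description of the boolean cumulants $r_n(\nu)$, while the $\uplus$-L\'evy-Khinchin formula converts the \emph{solvability} of that cumulant problem into the existence of a finite positive measure $\rho$. Throughout I take $m_0=0$, so that $V_\nu(m)+m(m-m_0)=V(m)+m^2$ and $r_1=m_0=0$.

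First I would record the analytic input. Since every probability measure is $\uplus$-infinitely divisible (\cite[Theorem 3.6]{Speicher-Woroudi-93}), the $\uplus$-L\'evy-Khinchin formula says that a probability measure $\nu$ corresponds to a pair $(\gamma,\sigma)$, $\gamma\in\RR$ and $\sigma$ a finite positive measure, via $K_\nu(z)=\gamma+\int\frac{1+xz}{z-x}\,\sigma(dx)$. Using $\frac{1+xz}{z-x}=x+\frac{1+x^2}{z-x}$ and putting $\rho(dx)=(1+x^2)\sigma(dx)$, this rewrites as $K_\nu(z)=r_1(\nu)+\int\frac{\rho(dx)}{z-x}$. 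Expanding the Cauchy transform of $\rho$ in powers of $1/z$ and comparing with \eqref{formula3} gives $r_{n+1}(\nu)=\int x^{\,n-1}\rho(dx)$ for all $n\ge1$; in particular $r_2(\nu)=\|\rho\|$, which is finite exactly when $\nu$ has finite variance, so in that case $\rho$ is genuinely a finite positive measure. Conversely, running this backwards, for every finite positive $\rho$ there is a probability measure $\nu$ with $K_\nu(z)=\int\frac{\rho(dx)}{z-x}$ (attach L\'evy-Khinchin to $\sigma(dx)=\rho(dx)/(1+x^2)$ and $\gamma=-\int x\,\sigma(dx)$), whose boolean cumulants are $r_1(\nu)=0$ and $r_{n+1}(\nu)=\int x^{\,n-1}\rho(dx)$.

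Then I would split the equivalence. For $(i)\Rightarrow(ii)$: applying Theorem~\ref{proposition3} to $\nu$ (with $m_0=0$) gives $r_{n+1}(\nu)=\frac{1}{n!}\left.\frac{d^{n-1}}{dm^{n-1}}(V(m)+m^2)^n\right|_{m=0}$; since $\nu$ has mean $0$ and variance $1$, the previous paragraph produces a finite positive $\rho$ with $r_{n+1}(\nu)=\int x^{\,n-1}\rho(dx)$, and equating the two expressions is exactly \eqref{BC}. For $(ii)\Rightarrow(i)$: start from the $\rho$ given by (ii); the case $n=1$ of \eqref{BC} forces $\|\rho\|=V(0)=1$, and analyticity of $V$ at $0$ makes the right side of \eqref{BC} finite for every $n$, so $\rho$ has all moments. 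Let $\nu$ be the probability measure with $K_\nu(z)=\int\frac{\rho(dx)}{z-x}$ supplied above; its cumulants are $r_1(\nu)=0$ and $r_{n+1}(\nu)=\int x^{\,n-1}\rho(dx)=\frac{1}{n!}\left.\frac{d^{n-1}}{dm^{n-1}}(V(m)+m^2)^n\right|_{m=0}$, all cumulants are finite so $\nu$ has all moments, mean $r_1(\nu)=0$, and variance $r_2(\nu)=\|\rho\|=1$. Hence $\nu$ satisfies condition (ii) of Theorem~\ref{proposition3}, which returns (i): $V$ is the variance function of the \CSK family generated by $\nu$.

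The Laurent expansions and the $m_0=0$ bookkeeping are routine. The main obstacle is making the double passage through the $\uplus$-L\'evy-Khinchin representation airtight: one must check that the representing measure can be taken \emph{finite} and positive (this is where the finite-variance hypothesis enters, equivalently $\|\rho\|=V(0)=1$ from the $n=1$ case), that finiteness of all the quantities in \eqref{BC} forces $\rho$ --- and hence the reconstructed $\nu$ --- to have moments of every order (needed so that Theorem~\ref{proposition3} is applicable in the $(ii)\Rightarrow(i)$ direction), and that the cumulants read off from $K_\nu(z)=\int\frac{\rho(dx)}{z-x}$ really are those of the reconstructed $\nu$. A secondary, more cosmetic point is to read ``$V$ is the variance function of the \CSK family generated by $\nu$'' in (i) in the sense of condition (i) of Theorem~\ref{proposition3} (non-degenerate, compactly supported), which is what lets the two results be chained.
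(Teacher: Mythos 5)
Your proposal is correct and follows essentially the same route as the paper: the paper's (two-line) proof likewise invokes the Speicher--Woroudi representation of the self-energy as the Cauchy transform of a finite positive measure $\rho$ (equation \eqref{KC}), matches its Laurent coefficients against \eqref{formula3} to get $r_{n+1}(\nu)=\int x^{n-1}\rho(dx)$, and combines this with the cumulant formula \eqref{formula4} of Theorem \ref{proposition3}. You have simply written out the details the paper leaves implicit (reduction of the general $\uplus$-L\'evy--Khinchin kernel to the Cauchy-transform form under mean $0$ and finite variance, the $n=1$ normalization $\|\rho\|=V(0)=1$, and the moment/compact-support bookkeeping needed to chain through Theorem \ref{proposition3} in both directions).
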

\begin{proof}
From \cite[Proposition 3.2]{Speicher-Woroudi-93}, $K_{\nu}$ is a self energy function if and only if there exists a finite positive measure $\rho$ such that $K_{\nu}$ is the Cauchy transform of $\rho$: that is
\begin{equation}\label{KC}
K_{\nu}(z)= \int \frac{1}{z-x}\rho(dx).
\end{equation}
Combining \eqref{KC} with \eqref{formula4} and \eqref{formula3}, this end the proof.
\end{proof}

\end{document}